\newtheorem{theorem}{Theorem}[section]
\newtheorem{lemma}[theorem]{Lemma}
\newtheorem{proposition}[theorem]{Proposition}
\newtheorem{corollary}[theorem]{Corollary}
\theoremstyle{definition}
\newtheorem{remark}[theorem]{Remark}
\newtheorem{definition}[theorem]{Definition}
\newtheorem*{ack}{Acknowledgements}
\numberwithin{equation}{section}
\title[Poincar\'e inequalities via Optimal Transport]{A note on some\\ Poincar\'e inequalities on convex sets\\ by Optimal Transport methods}
\author[Brasco]{Lorenzo Brasco}
\author[Santambrogio]{Filippo Santambrogio}
\address[L.\ Brasco]{Dipartimento di Matematica e Informatica
\newline\indent
Universit\`a degli Studi di Ferrara,
44121 Ferrara, Italy}
\address{{\it and } Institut de Math\'ematiques de Marseille
\newline\indent
Aix-Marseille Universit\'e,
Marseille, France}
\email{lorenzo.brasco@unife.it}
\address[F.\ Santambrogio]{Laboratoire de Math\'ematiques d'Orsay 
\newline\indent 
Univ. Paris-Sud, CNRS, Universit\'e Paris-Saclay, 91405 Orsay Cedex, France}
\email{filippo.santambrogio@math.u-psud.fr}
\subjclass[2010]{39B62, 46E35}
\keywords{Poincar\'e inequalities, Wasserstein distances}
\begin{document}

\begin{abstract}
We show that a class of Poincar\'e-Wirtinger inequalities on bounded convex sets can be obtained by means of the dynamical formulation of Optimal Transport. This is a consequence of a more general result valid for convex sets, possibly unbounded.
\end{abstract}

\maketitle

\begin{center}
\begin{minipage}{11cm}
\small
\tableofcontents
\end{minipage}
\end{center}

\section{Introduction}

\subsection{Overview}
Let $1<p<\infty$ and $0<r<\infty$. For an open set $\Omega\subset\mathbb{R}^N$, we introduce the Sobolev spaces
\[
\mathrm{W}^{1,p}_{r}(\Omega):=\left\{\phi\in L^{r}(\Omega)\,:\, \nabla \phi\in L^p(\Omega;\mathbb{R}^N)\right\},
\]
and
\[
\ddot{\mathrm{W}}^{1,p}_{r}(\Omega):=\left\{\phi\in \mathrm{W}^{1,p}_{r}(\Omega)\,:\, \int_\Omega |\phi|^{r-1}\,\phi\,dx=0\right\}.
\]
In the particular case $r=p$, we will omit to indicate it and simply write $\mathrm{W}^{1,p}(\Omega)$ and $\ddot{\mathrm{W}}^{1,p}(\Omega)$.
\par
The aim of this note is to prove some functional inequalities for the space $\ddot{\mathrm{W}}^{1,p}_{r}(\Omega)$, by means of Optimal Transport techniques. The use of Optimal Transport to prove functional and geometric inequalities is nowadays classical. We are not concerned here with geometric inequalities, thus we only refer to Sections 2.5.3 and 7.4.2 of \cite{OTAM} for a brief discussion on the subject (in particular on the isoperimetric and the Brunn-Minkowski inequalities). As for functional inequalities obtained via Optimal Transport techniques, which is the main concern of this paper, after the fundamental paper \cite{CENV} the literature on the subject is now quite rich. In addition to \cite{CENV}, we encourage the reader to look in details into the papers \cite{AGK,Cor,MV1,MV2} and \cite{Na}, for example.
\par
However, it is useful to observe that most of these papers use the geometric properties of the optimal transport map as a tool to obtain a clever change-of-variable. This is indeed the case for the transport-based proof of the isoperimetric, Sobolev and Gagliardo-Nirenberg inequalities. We could say that they are based on the ``statical'' version of Optimal Transport problems. 
\par
On the contrary, the proof that we propose here is based on the ``dynamical'' counterpart of Optimal Transport (the so-called {\it Benamou-Brenier formula}, see \cite{BenBre}) and on {\it displacement convexity} considerations, see \cite{McC}. In this respect, it can be more suitably compared to the transport-based proof of the Brunn-Minkowski inequality. 
\par
It is also useful to remark that while the above cited papers deal with functional inequalities which are invariant for the trasformation $\phi\mapsto |\phi|$, such as Sobolev and Gagliardo-Nirenberg ones, this is not the case here. Indeed, if a function $\phi$ belongs to our space $\ddot{\mathrm{W}}^{1,p}_{r}(\Omega)$, then $|\phi|\not\in \ddot{\mathrm{W}}^{1,p}_{r}(\Omega)$. Thus, in order to prove our main result (see Theorem \ref{thm:mother} below), we can not reduce to the case of positive functions and then use an optimal transport to transform any positive function $\phi$ into an extremal of the relevant functional inequality, as in \cite{CENV}. Roughly speaking, what we do is to perform an optimal transport between the positive and negative parts $\phi_+$ and $\phi_-$ (suitably renormalized). 
\par
Our proof has some points in common
with the one presented by Rajala in \cite{Ra}, which is valid in general metric measure spaces under {\it Ricci curvature conditions}. Indeed, it is well-known that Ricci curvature conditions are linked to the displacement convexity of suitable functionals (see for instance the work \cite{LV} by Lott and Villani, to which \cite{Ra} is inspired). However, even if the result of \cite[Theorem 1.1]{Ra} holds in a much more general setting, we stress that the tools used in \cite{Ra} are not the same as ours. Moreover, the result of \cite{Ra} only concerns with Poincar\'e inequalities on balls in the case $q=1$ (with our notation below).

\subsection{Main result}
In order to neatly present the main result, we first need to recall some basic definitions and notations. 
\par
We indicate by $\mathcal{P}(\Omega)$ the set of all Borel probability measures over $\Omega$. Then for $1<m<\infty$, we define
\begin{equation}
\label{momento!}
\mathcal{P}_m(\Omega)=\left\{\mu\in \mathcal{P}(\Omega)\, :\, \int_\Omega |x|^m\,d\mu<\infty \right\},
\end{equation}
i.e. the set of probability measure over $\Omega$ with finite moment of order $m$.
For every $\mu,\nu\in \mathcal{P}_m(\Omega)$ their {\it $m-$Wasserstein distance} is defined through the optimal transport problem
\[
W_m(\mu,\nu)=\left(\min_{\gamma\in\Pi(\mu,\nu)}\int_{\Omega\times\Omega} |x-y|^m\,d\gamma\right)^\frac{1}{m}.
\]
Here $\Pi(\mu,\nu)\subset \mathcal{P}(\Omega\times\Omega)$ is the set of {\it transport plans}, i.e. the probability measures on the product space $\Omega\times\Omega$ such that
\[
\gamma(A\times\Omega)=\mu(A)\qquad \gamma(\Omega\times B)=\nu(B),\qquad \mbox{ for every } A,B\subset\Omega \mbox{ Borel sets}.
\]
In what follows, we will note by $\mathcal{L}^N$ the $N-$dimensional Lebesgue measaure. For a function $f\in L^1$, the writing
\[
\mu=f\cdot\mathcal{L}^N,
\] 
will indicate the Radon measure which is absolutely continuous with respect to $\mathcal{L}^N$ and whose Radon-Nikodym derivative is given by $f$.
\par
In this note we prove the following scaling invariant inequality, which is valid for general convex sets.
\begin{theorem}
\label{thm:mother}
Let $1<p<\infty$ and $1<q< p$. Let $\Omega\subset\mathbb{R}^N$ be an open convex set. For every $\phi\in \ddot{\mathrm{W}}^{1,p}_{q-1}(\Omega)$ such that
\[
\int_\Omega |x|^\frac{p}{p-q}\,|\phi|^{q-1}\,dx<\infty,
\]
we define the two probability measures $\rho_0,\rho_1\in\mathcal{P}_{p/(p-q)}(\Omega)$
\[
\rho_0=\frac{|\phi|^{q-2}\,\phi_+}{\displaystyle\int_\Omega |\phi|^{q-2}\,\phi_+\,dx}\cdot \mathcal{L}^N\qquad \mbox{ and }\qquad \rho_1=\frac{|\phi|^{q-2}\,\phi_-}{\displaystyle\int_\Omega |\phi|^{q-2}\,\phi_-\,dx}\cdot \mathcal{L}^N.
\] 
Then there holds
\begin{equation}
\label{filippica}
\left(\int_\Omega |\phi|^q\,dx\right)^{p-q+1}\le \frac{\left(W_\frac{p}{p-q}(\rho_0,\rho_1)\right)^p}{2^{p-1}}\,\int_\Omega |\nabla \phi|^p\,dx\,\left(\displaystyle\int_\Omega |\phi|^{q-1}\,dx\right)^{p-q}.
\end{equation}
\end{theorem}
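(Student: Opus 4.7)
The plan is to combine the dynamical (Benamou-Brenier) formulation of $W_{p/(p-q)}$ with McCann's displacement convexity of the internal energy $\rho\mapsto\int\rho^{q/(q-1)}\,dx$. The vanishing-mean condition $\phi\in\ddot{\mathrm{W}}^{1,p}_{q-1}(\Omega)$ yields
\[
M:=\int_\Omega|\phi|^{q-2}\phi_+\,dx=\int_\Omega|\phi|^{q-2}\phi_-\,dx=\tfrac12\int_\Omega|\phi|^{q-1}\,dx,
\]
so that $\rho_0=M^{-1}\phi_+^{q-1}\,\mathcal{L}^N$, $\rho_1=M^{-1}\phi_-^{q-1}\,\mathcal{L}^N$ and $\int_\Omega|\phi|^q\,dx=M\int\phi_+\,d\rho_0+M\int\phi_-\,d\rho_1$. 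Let $T$ be the optimal transport from $\rho_0$ to $\rho_1$ for the cost $|x-y|^{p/(p-q)}$, and let $S_t=(1-t)\mathrm{Id}+tT$, $\rho_t=S_t\#\rho_0$ be the associated displacement interpolation, with Eulerian velocity $v_t$ characterized by $v_t\circ S_t=T-\mathrm{Id}$. Convexity of $\Omega$ ensures each segment $[x,T(x)]\subset\Omega$; combining with $\phi\ge 0$ on $\mathrm{supp}\,\rho_0$ and $\phi\le 0$ on $\mathrm{supp}\,\rho_1$, for $\rho_0$-a.e.\ $x$ I would write
\[
\phi_+(x)+\phi_-(T(x))=|\phi(x)-\phi(T(x))|\le\int_0^1|\nabla\phi(S_t(x))|\,|T(x)-x|\,dt,
\]
then multiply by $M\,d\rho_0(x)$, push forward by $S_t$, and use Fubini to obtain
\[
\int_\Omega|\phi|^q\,dx\le M\int_0^1\!\int_\Omega|\nabla\phi(y)|\,|v_t(y)|\,d\rho_t(y)\,dt.
\]

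The key step is a tripartite H\"older in $x$. Writing the integrand as $|\nabla\phi|\cdot\bigl(|v_t|^{p/(p-q)}\rho_t\bigr)^{(p-q)/p}\cdot\bigl(\rho_t^{q/(q-1)}\bigr)^{(q-1)/p}$ and applying H\"older with the three conjugate exponents $p$, $p/(p-q)$, $p/(q-1)$ (whose reciprocals sum to $1$) gives
\[
\int|\nabla\phi|\,|v_t|\,d\rho_t\le\Bigl(\int|\nabla\phi|^p\,dx\Bigr)^{\!\frac{1}{p}}\Bigl(\int|v_t|^{\frac{p}{p-q}}d\rho_t\Bigr)^{\!\frac{p-q}{p}}\Bigl(\int\rho_t^{\frac{q}{q-1}}dx\Bigr)^{\!\frac{q-1}{p}}.
\]
Integrating in $t$ and applying the analogous three-exponent H\"older on $[0,1]$ — with the constant $1$ as the third factor carrying exponent $p$ — reduces matters to $\int_0^1\!\int|v_t|^{p/(p-q)}d\rho_t\,dt$, equal to $W_{p/(p-q)}(\rho_0,\rho_1)^{p/(p-q)}$ by Benamou-Brenier, and $\int_0^1\!\int\rho_t^{q/(q-1)}dx\,dt$, which I would control via McCann's displacement convexity of $\rho\mapsto\int\rho^{q/(q-1)}$ (valid since $q/(q-1)>1$):
\[
\int_0^1\!\int\rho_t^{q/(q-1)}dx\,dt\le\tfrac{1}{2}\Bigl(\int\rho_0^{q/(q-1)}+\int\rho_1^{q/(q-1)}\Bigr)=\frac{1}{2M^{q/(q-1)}}\int_\Omega|\phi|^q\,dx.
\]

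Substituting back, the powers of $M$ consolidate to $M^{(p-q)/p}=2^{-(p-q)/p}\bigl(\int|\phi|^{q-1}\bigr)^{(p-q)/p}$; since $\int|\phi|^q$ then appears on both sides (with exponents $1$ and $(q-1)/p$), moving it to the left gives $\bigl(\int|\phi|^q\bigr)^{(p-q+1)/p}$ on the left, and raising to the $p$-th power yields \eqref{filippica} with the sharp constant $2^{-(p-1)}$. The main obstacle I anticipate is justifying the pointwise line-integral identity and its Eulerian push-forward for Sobolev $\phi$ and an a priori only measurable transport map $T$; this should be handled by a standard mollification argument, with the moment hypothesis $\int|x|^{p/(p-q)}|\phi|^{q-1}dx<\infty$ ensuring $W_{p/(p-q)}(\rho_0,\rho_1)<\infty$ and hence the finiteness of all quantities involved.
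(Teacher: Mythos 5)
Your computation is correct and rests on the same two pillars as the paper's proof: the Benamou--Brenier energy of the displacement interpolation and the displacement convexity of $\rho\mapsto\int\rho^{q/(q-1)}$, combined through the same three-exponent H\"older inequality (the paper performs it as two successive H\"older steps, with exponents $p/q$ and $p/(p-q)$ in the space-time measure $f_t\,dx\,dt$ and then $q,q'$ inside the first factor, but this is arithmetically identical to your tripartite splitting). The genuine difference is the starting identity: the paper works in Eulerian variables, testing the distributional continuity equation of Proposition \ref{thm:geodesics} against $\phi$ to obtain $\int\phi\,(f_1-f_0)=\int_0^1\int\langle\nabla\phi,\mathbf{v}_t\rangle\,d\mu_t\,dt$ for $\phi\in C^1(\overline\Omega)$ and then extends by density, whereas you work in Lagrangian variables via the pointwise line integral along the transport rays $[x,T(x)]$. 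Your route is potentially more self-contained for unbounded $\Omega$ (existence of an optimal map and McCann's convexity do not require boundedness), while the paper isolates a clean intermediate lemma, inequality \eqref{basic}, valid for arbitrary densities $f_0,f_1\in L^{q'}(\Omega)$, and then handles unbounded sets by exhaustion.

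Two points need more than the ``standard mollification'' you invoke. First, the absorption step --- moving $\bigl(\int_\Omega|\phi|^q\,dx\bigr)^{(q-1)/p}$ from the right-hand side to the left --- is legitimate only if $\int_\Omega|\phi|^q\,dx<\infty$ a priori, which is \emph{not} among the hypotheses (the paper explicitly remarks that $\phi\in L^q(\Omega)$ is a consequence of the theorem, not an assumption). The paper secures this via the embedding Lemma \ref{lm:dentro} on bounded sets, and then uses Fatou's lemma in the exhaustion $\Omega_k=\Omega\cap B_k$ with the renormalized functions $\phi_k=\phi_+-\delta_k\,\phi_-$ for the general case; you need an analogous truncation or exhaustion. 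Second, the pointwise inequality $|\phi(x)-\phi(T(x))|\le\int_0^1|\nabla\phi(S_t(x))|\,|T(x)-x|\,dt$ for $\rho_0$-a.e.\ $x$ is delicate for Sobolev $\phi$ and a merely measurable $T$: the clean fix is to prove the integrated (Eulerian) inequality for $\phi\in C^1(\overline\Omega)$ first and pass to the limit using that each $\rho_t$ has an $L^{q'}$ density controlled by displacement convexity --- which is precisely the order of operations in the paper's proof of \eqref{basic}. Neither issue is fatal, but both must be addressed for the argument to be complete.
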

The proof of this result is postponed to Section \ref{sec:3}. We point out that inequality \eqref{filippica} in turn implies a handful of Poincar\'e-type inequalities with explicit constants. The reader is invited to jump directly to Section \ref{sec:4} in order to discover them. In particular, as a corollary we can obtain a lower bound for the first non-trivial Neumann eigenvalue of the $p-$Laplacian, see Corollary \ref{coro:PW}. This can be seen as a weak version of the {\it Payne-Weinberger inequality} (see \cite{Be,FNT,PW}): though the explicit constant we get is not optimal, we believe the method of proof to be of independent interest.
\begin{remark}
We point out that the hypothesis $\phi\in L^q(\Omega)$ {\it is not needed} in Theorem \ref{thm:mother}. Rather, inequality \eqref{filippica} permits to show that on a convex set, functions in $\ddot{\mathrm{W}}^{1,p}_{q-1}(\Omega)$ and with finite moment of order $p/(p-q)$ are automatically in $L^q(\Omega)$.
\end{remark}

\begin{ack}
The authors wish to thank Cristina Trombetti for pointing out the references \cite{ENT} and \cite{FNT}.
This work has been partially supported by the Gaspard Monge Program for Optimization (PGMO), created by EDF and the Jacques Hadamard Mathematical Foundation, through the research contract MACRO, and by the ANR through the contract ANR- 12-BS01-0014-01 GEOMETRYA.
\end{ack}

\section{Preliminaries}

\subsection{An embedding result} We will need a couple of basic inequality for Sobolev spaces in bounded sets. The proofs are standard, but we give it for the reader's convenience. The values of the constants appearing in the inequalities below will have no bearing in what follows.
\begin{lemma}
Let $1<p<\infty$ and let $\Omega\subset\mathbb{R}^N$ be an open connected and bounded set, with Lipschitz boundary. Then for every $\phi\in \mathrm{W}^{1,p}(\Omega)$ we have
\begin{equation}
\label{giusti}
\int_\Omega |\phi|^p\,dx\le C\,\frac{|\Omega|}{|A_\phi|}\,\int_\Omega |\nabla \phi|^p\,dx,\qquad A_\phi:=\{x\in\Omega\, :\, |\phi(x)|=0\},
\end{equation}
for some $C=C(N,p,\Omega)>0$.
\end{lemma}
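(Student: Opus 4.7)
The plan is to bootstrap the classical Poincaré--Wirtinger inequality on $\Omega$ by using the zero set $A_\phi$ to control the average of $\phi$. Since $\Omega$ is connected, bounded, and Lipschitz, I can take for granted the inequality
\[
\int_\Omega |\phi - \phi_\Omega|^p\,dx \le C_0(N,p,\Omega)\,\int_\Omega |\nabla \phi|^p\,dx,\qquad \phi_\Omega := \frac{1}{|\Omega|}\int_\Omega \phi\,dx,
\]
which is the starting point.

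Next I would exploit the fact that $\phi$ vanishes on the set $A_\phi$ in order to estimate $|\phi_\Omega|$. Restricting the integral of $|\phi-\phi_\Omega|^p$ to $A_\phi$ gives
\[
|A_\phi|\,|\phi_\Omega|^p = \int_{A_\phi} |\phi - \phi_\Omega|^p\,dx \le \int_\Omega |\phi - \phi_\Omega|^p\,dx \le C_0 \int_\Omega |\nabla \phi|^p\,dx,
\]
so that
\[
|\phi_\Omega|^p \le \frac{C_0}{|A_\phi|}\int_\Omega |\nabla \phi|^p\,dx.
\]

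Finally, I would combine these two pieces with the elementary convexity bound $|a+b|^p \le 2^{p-1}(|a|^p + |b|^p)$:
\[
\int_\Omega |\phi|^p\,dx \le 2^{p-1}\int_\Omega |\phi - \phi_\Omega|^p\,dx + 2^{p-1}\,|\Omega|\,|\phi_\Omega|^p \le 2^{p-1}\,C_0\left(1 + \frac{|\Omega|}{|A_\phi|}\right)\int_\Omega |\nabla \phi|^p\,dx.
\]
Since $|A_\phi|\le |\Omega|$ makes $|\Omega|/|A_\phi|\ge 1$, absorbing the $1$ yields the claimed \eqref{giusti} with $C = 2^p\,C_0$.

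There is no real obstacle: the only ingredient beyond routine manipulations is the standard Poincaré--Wirtinger inequality on a bounded connected Lipschitz domain, whose constant depends on $N,p,\Omega$ but is irrelevant for the sequel. The only point to double-check is the trivial one that $|A_\phi|>0$ is implicitly assumed (otherwise the right-hand side is vacuously infinite, and the inequality is still valid).
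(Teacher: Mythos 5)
Your argument is correct and is essentially identical to the paper's own proof: both bound $|\overline\phi_\Omega|^p$ by integrating $|\phi-\overline\phi_\Omega|^p$ over the zero set $A_\phi$, then combine this with the convexity inequality $|a+b|^p\le 2^{p-1}(|a|^p+|b|^p)$ and the standard Poincar\'e--Wirtinger inequality for functions with vanishing mean. Your final absorption of the constant $1$ into $|\Omega|/|A_\phi|$ is a harmless tidying step the paper leaves implicit.
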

\begin{proof}
The proof is an adaptation of that of \cite[Theorem 3.16]{Gi}. We first observe that if we indicate by $\overline \phi_\Omega$ the mean of $\phi$ over $\Omega$, then
\[
|A_\phi|\,|\overline\phi_\Omega|^p=\int_{A_\phi} |\overline\phi_\Omega|^p\,dx=\int_{A_\phi} |\phi-\overline\phi_\Omega|^p\,dx\le \int_{\Omega} |\phi-\overline\phi_\Omega|^p\,dx.
\] 
By using this information, with elementary manipulations we then get
\[
\int_\Omega |\phi|^p\,dx\le 2^{p-1}\, \int_\Omega |\phi-\overline\phi_\Omega|^p\,dx+2^{p-1}\, \frac{|\Omega|}{|A_\phi|}\,\int_\Omega |\phi-\overline \phi_\Omega|^p\,dx.
\]
We can conclude by applying Poincar\'e inequality for functions with vanishing mean, see for example \cite[Theorem 3.14]{Gi}.
\end{proof}
The next interpolation inequality for the Sobolev space $\mathrm{W}^{1,p}_r(\Omega)$ will be useful.  
\begin{lemma}
\label{lm:dentro}
Let $1<p<\infty$ and $0<r< p$. Let $\Omega\subset\mathbb{R}^N$ be a open connected and bounded set with Lipschitz boundary. Then $\mathrm{W}^{1,p}_{r}(\Omega)\subset L^p(\Omega)$. More precisely, for every $\phi\in \mathrm{W}^{1,p}_r(\Omega)$ we have
\[
\int_\Omega |\phi|^p\,dx\le C\, \int_\Omega |\nabla \phi|^p\,dx+C\left(\int_\Omega |\phi|^r\,dx\right)^\frac{p}{r},
\]
for some $C=C(N,p,\Omega)>0$.
\end{lemma}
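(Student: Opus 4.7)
The idea is to reduce to Lemma~\eqref{giusti} by subtracting a suitable ``median-like'' threshold from $|\phi|$, so that the resulting truncated function vanishes on a large subset of $\Omega$. Since we do not yet know that $\phi\in L^p(\Omega)$, we will further truncate from above and pass to the limit by Fatou's lemma.

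\textbf{Step 1: Choice of the threshold.} Set
\[
t_0:=\left(\frac{2}{|\Omega|}\int_\Omega|\phi|^r\,dx\right)^{1/r}.
\]
By Markov's inequality,
\[
\bigl|\{x\in\Omega:|\phi(x)|>t_0\}\bigr|\;\le\;\frac{1}{t_0^{\,r}}\int_\Omega|\phi|^r\,dx\;=\;\frac{|\Omega|}{2},
\]
hence the set $E:=\{|\phi|\le t_0\}$ satisfies $|E|\ge|\Omega|/2$.

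\textbf{Step 2: Double truncation and Lemma~\eqref{giusti}.} For every $M>0$ consider
\[
\psi_M:=\min\bigl((|\phi|-t_0)_+,\,M\bigr).
\]
Then $\psi_M\in L^\infty(\Omega)\subset L^p(\Omega)$ with $|\nabla\psi_M|\le|\nabla\phi|$ a.e., so $\psi_M\in \mathrm{W}^{1,p}(\Omega)$. Moreover its zero set contains $E$, therefore $|A_{\psi_M}|\ge|\Omega|/2$. Lemma~\eqref{giusti} gives
\[
\int_\Omega\psi_M^{\,p}\,dx\;\le\;2C\int_\Omega|\nabla\psi_M|^p\,dx\;\le\;2C\int_\Omega|\nabla\phi|^p\,dx.
\]
Letting $M\to\infty$, Fatou's lemma yields
\[
\int_\Omega(|\phi|-t_0)_+^{\,p}\,dx\;\le\;2C\int_\Omega|\nabla\phi|^p\,dx.
\]

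\textbf{Step 3: Conclusion.} Pointwise we have $|\phi|\le(|\phi|-t_0)_+\,+\,t_0$, and therefore $|\phi|^p\le 2^{p-1}(|\phi|-t_0)_+^{\,p}+2^{p-1}t_0^{\,p}$. Integrating and using the definition of $t_0$,
\[
\int_\Omega|\phi|^p\,dx\;\le\;2^p\,C\int_\Omega|\nabla\phi|^p\,dx\;+\;2^{p-1}\,t_0^{\,p}\,|\Omega|\;\le\;2^p\,C\int_\Omega|\nabla\phi|^p\,dx\;+\;2^{p-1+p/r}\,|\Omega|^{1-p/r}\!\left(\int_\Omega|\phi|^r\,dx\right)^{\!p/r}\!.
\]
This simultaneously proves $\phi\in L^p(\Omega)$ and the claimed inequality.

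\textbf{Main obstacle.} Since the statement does not assume a priori $\phi\in L^p(\Omega)$, we cannot apply Lemma~\eqref{giusti} directly to $(|\phi|-t_0)_+$. The only subtle point is to handle this through the auxiliary upper truncation at level $M$ (which makes $\psi_M$ bounded, hence in $\mathrm{W}^{1,p}(\Omega)$), and then to use monotone/Fatou convergence as $M\to\infty$ while keeping the control on the zero set $E$, whose measure is independent of $M$.
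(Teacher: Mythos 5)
Your proof is correct and follows essentially the same route as the paper's: the same threshold $t_0=(2\|\phi\|_{L^r}^r/|\Omega|)^{1/r}$ chosen via Chebyshev/Markov so the truncation vanishes on at least half of $\Omega$, the same auxiliary upper truncation at level $M$ to legitimately invoke \eqref{giusti}, Fatou as $M\to\infty$, and the elementary splitting $|\phi|\le(|\phi|-t_0)_++t_0$ to conclude. No issues.
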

\begin{proof}
Given $\phi\in \mathrm{W}^{1,p}_{r}(\Omega)$, for every $t>0$ and $M>0$ we define
\[
 \phi_{t}(x)=(|\phi(x)|-t)_+\qquad\mbox{ and }\qquad \phi_{t,M}(x)=\min\{\phi_t(x),\, M\}.
\]
The function $\phi_{t,M}$ belongs to $\mathrm{W}^{1,p}(\Omega)$ and by Chebyshev's inequality
\begin{equation}
\label{zeri}
|A_{t,M}|:=\big|\{x\in \Omega\, :\,\phi_{t,M}(x)\not=0\}\big|\le \frac{1}{t^{r}}\,\int_\Omega |\phi|^{r}\,dx.
\end{equation}
From \eqref{giusti} we get
\[
\int_\Omega |\phi_{t,M}|^p\,dx\le C\, \left(\frac{|\Omega|}{|\Omega\setminus A_{t,M}|}\right)\,\int_\Omega |\nabla \phi_{t,M}|^p\,dx,
\]
and observe that from \eqref{zeri}
\[
\frac{|\Omega|}{|\Omega\setminus A_{t,M}|}=\frac{|\Omega|}{|\Omega|-|A_{t,M}|}\le \frac{1}{2},\qquad \mbox{ if we choose } t=\left(\frac{2}{|\Omega|}\right)^{1/r}\,\|\phi\|_{L^r(\Omega)}.
\]
We thus obtain
\[
\int_\Omega |\phi_{t,M}|^p\,dx\le \frac{C}{2}\, \int_\Omega |\nabla \phi|^p\,dx.
\]
It is now possible to take the limit as $M$ goes to $\infty$, thus getting by Fatou's Lemma 
$$
\int_\Omega |\phi_{t}|^p\,dx\le \frac{C}{2}\, \int_\Omega |\nabla \phi|^p\,dx.
$$
By recalling the choice of $t$ and observing that $|\phi|\leq t+\phi_{t}$, we get the desired conclusion.
\end{proof}
\subsection{Some tools from Optimal Transport}
We recall a couple of standard result in Optimal Transport, that will be needed for the proof of the main result. For more details, the reader is invited to refer to classical monographs such as \cite{AGS} or \cite{Vi}, or to the more recent one \cite{OTAM}.
\begin{definition}
The {\it $m-$Wasserstein space over $\Omega$} is the set $\mathcal{P}_m(\Omega)$ defined in \eqref{momento!}, equipped with Wasserstein the distance $W_m$. This metric space will be denoted by $\mathbb{W}_m(\Omega)$.
\end{definition}
The first important tool we need is a characterization of geodesics in the Wasserstein space. This is essentially a refined version of the celebrated Benamou-Brenier formula, firstly introduced in \cite{BenBre}.
The proof can be found in \cite[Theorem 5.14 \& Proposition 5.30]{OTAM}.
\begin{proposition}[Wasserstein geodesics]
\label{thm:geodesics}
Let $1<m<\infty$ and let $\Omega\subset\mathbb{R}^N$ be an open bounded convex set. Let $\rho_0,\rho_1\in \mathbb{W}_m(\Omega)$, then there exists an absolutely continuous curve $(\mu_t)_{t\in[0,1]}$ in the Wasserstein space $\mathbb{W}_m(\Omega)$ and a vector field $\mathbf{v}_t\in L^m(\Omega;\mu_t)$ such that
\begin{itemize}
\item $\mu_0=\rho_0$ and $\mu_1=\rho_1$;
\vskip.2cm
\item the continuity equation 
\[
\left\{\begin{array}{rclc}
\partial_t \mu_t+\mathrm{div}(\mathbf{v}_t\,\mu_t)&=&0,& \mbox{ in }\Omega,\\
\langle \mathbf{v}_t,\nu_\Omega\rangle& = &0,& \mbox{ on }\partial\Omega
\end{array}
\right.
\]
holds in distributional sense, i.e. for every $\phi\in C^1([0,1]\times\overline\Omega)$ there holds
\[
\int_0^1\int_\Omega \partial_t \phi\,d\mu_t\,dt+\int_0^1\int_\Omega \langle \nabla \phi,\mathbf{v}_t\rangle\,d\mu_t\,dt=\int_\Omega \phi(1,\cdot)\,d\rho_1-\int_\Omega \phi(0,\cdot)\,d\rho_0;
\]
\item we have
\[
\int_0^1\|\mathbf{v}_t\|_{L^m(\Omega; \mu_t)}\,dt= W_m(\rho_0,\rho_1).
\]
\end{itemize}
\end{proposition}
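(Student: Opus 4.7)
My plan is to connect $\rho_0$ and $\rho_1$ by a Wasserstein geodesic in $\mathbb{W}_m(\Omega)$, with $m := p/(p-q)$, and to derive \eqref{filippica} by testing the associated continuity equation against $\phi$ itself, combining two H\"older bounds with a displacement-convexity step. Set $\mu := \int_\Omega |\phi|^{q-2}\phi_+\,dx$; the mean-zero condition forces $\mu = \int_\Omega|\phi|^{q-2}\phi_-\,dx = \tfrac12\int_\Omega|\phi|^{q-1}\,dx$. Let $(\mu_t)_{t\in[0,1]}$, with densities $r_t$, and $\mathbf{v}_t$ be given by Proposition~\ref{thm:geodesics}; in the constant-speed parametrization $\|\mathbf{v}_t\|_{L^m(\mu_t)} = W := W_m(\rho_0,\rho_1)$ for a.e.\ $t$. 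Using the time-independent $\phi$ as test in the continuity equation and computing $\int\phi\,d\rho_0-\int\phi\,d\rho_1=\mu^{-1}\int|\phi|^q\,dx$ yields
$$
\int_\Omega|\phi|^q\,dx \;=\; -\mu\int_0^1\!\!\int_\Omega\langle\nabla\phi,\mathbf{v}_t\rangle\,d\mu_t\,dt \;\le\; \mu\int_0^1\!\!\int_\Omega|\nabla\phi|\,|\mathbf{v}_t|\,d\mu_t\,dt.
$$

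The H\"older conjugate of $m$ is $m'=p/q$, so H\"older with respect to $\mu_t$ on each slice gives $\int|\nabla\phi||\mathbf{v}_t|\,d\mu_t\le W\,\|\nabla\phi\|_{L^{p/q}(\mu_t)}$. To move from $\|\nabla\phi\|_{L^{p/q}(\mu_t)}$ to the Lebesgue $L^p$-norm of $\nabla\phi$, I apply H\"older in $(t,x)$ jointly with exponents $(q,q/(q-1))$ to the product $|\nabla\phi|^{p/q}\cdot r_t$, and then Jensen's inequality in $t$ (using the concavity of $x\mapsto x^{q/p}$). These steps produce
$$
\int_\Omega|\phi|^q\,dx \;\le\; \mu\,W\,\|\nabla\phi\|_{L^p(\Omega)}\left(\int_0^1\!\!\int_\Omega r_t^{\,q/(q-1)}\,dx\,dt\right)^{(q-1)/p}.
$$

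The final ingredient is displacement convexity. With $\alpha:=q/(q-1)>1$, the function $s\mapsto s^{N(1-\alpha)}=s^{-N/(q-1)}$ is convex and nonincreasing on $(0,\infty)$, so $\rho\mapsto\int_\Omega\rho^\alpha\,dx$ is geodesically convex along $\mathbb{W}_m$-geodesics (cf.\ \cite{OTAM}). Hence $\int r_t^\alpha\,dx \le (1-t)\int\rho_0^\alpha\,dx+t\int\rho_1^\alpha\,dx$, which integrated in $t$ together with the identity $\int\rho_0^\alpha\,dx+\int\rho_1^\alpha\,dx=\mu^{-\alpha}\int_\Omega|\phi|^q\,dx$ yields
$$
\int_0^1\!\!\int_\Omega r_t^{\,q/(q-1)}\,dx\,dt \;\le\; \frac{1}{2\mu^{q/(q-1)}}\int_\Omega|\phi|^q\,dx.
$$
Substituting back, collecting the powers of $\int|\phi|^q\,dx$ on the left, raising the resulting inequality to the power $p$ and using $2\mu=\int|\phi|^{q-1}\,dx$ delivers \eqref{filippica}: the constant $2^{-(p-1)}$ arises as $2^{-(q-1)}\cdot 2^{-(p-q)}$.

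The chief technical obstacles are of a book-keeping nature. Proposition~\ref{thm:geodesics} is stated for bounded convex $\Omega$, so for a general convex $\Omega$ I would work on the bounded convex sets $\Omega\cap B_R$ and pass to the limit, invoking the moment hypothesis $\int|x|^{p/(p-q)}|\phi|^{q-1}\,dx<\infty$ to guarantee $W_m(\rho_0,\rho_1)<\infty$; using $\phi$ as a $C^1$ test function requires a routine mollification; and, as noted in the remark, $\phi\in L^q$ is not assumed \emph{a priori}, so one works with a truncation of $\phi$ (carefully chosen so as to preserve the constraint $\int|\phi|^{q-2}\phi\,dx=0$), derives \eqref{filippica} uniformly in the truncation, and concludes by Fatou. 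The only genuinely non-H\"olderian step is the displacement-convexity bound, which is precisely where the convexity of $\Omega$ is really used (through Proposition~\ref{thm:geodesics}).
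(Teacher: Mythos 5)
Your proposal does not address the statement it was supposed to prove. The statement in question is Proposition \ref{thm:geodesics} itself --- the existence, for $\rho_0,\rho_1\in\mathbb{W}_m(\Omega)$ on a bounded convex $\Omega$, of an absolutely continuous curve $(\mu_t)_{t\in[0,1]}$ and velocity fields $\mathbf{v}_t\in L^m(\Omega;\mu_t)$ solving the continuity equation with no-flux boundary condition and satisfying $\int_0^1\|\mathbf{v}_t\|_{L^m(\mu_t)}\,dt=W_m(\rho_0,\rho_1)$. What you wrote is instead a derivation of the main inequality \eqref{filippica} of Theorem \ref{thm:mother}, and it \emph{invokes} Proposition \ref{thm:geodesics} as a black box (``Let $(\mu_t)$, with densities $r_t$, and $\mathbf{v}_t$ be given by Proposition \ref{thm:geodesics}''). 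As a proof of the Proposition this is circular and contains none of the required ingredients: there is no construction of the geodesic (e.g.\ McCann's interpolation $\mu_t=\big((1-t)\,\pi_x+t\,\pi_y\big)_{\#}\gamma_{opt}$ from an optimal plan $\gamma_{opt}$, which stays inside $\overline\Omega$ precisely because $\Omega$ is convex), no definition of the velocity field $\mathbf{v}_t$ transporting $\mu_t$, no verification of the distributional continuity equation against test functions in $C^1([0,1]\times\overline\Omega)$ (which encodes the Neumann condition $\langle\mathbf{v}_t,\nu_\Omega\rangle=0$), and no identification of $\int_0^1\|\mathbf{v}_t\|_{L^m(\mu_t)}\,dt$ with $W_m(\rho_0,\rho_1)$ via the metric speed of the curve. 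Note that the paper itself does not reprove this statement either: it is quoted from the literature (Theorem 5.14 and Proposition 5.30 of \cite{OTAM}, i.e.\ a refined Benamou--Brenier characterization of Wasserstein geodesics), so a self-contained proof would have to reproduce that material.

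As an aside, read as an argument for Theorem \ref{thm:mother} your sketch is essentially the paper's own proof: test the continuity equation with $\phi$, apply H\"older with exponents $p/q$ and $p/(p-q)$, and control $\int_0^1\!\int r_t^{q/(q-1)}\,dx\,dt$ by geodesic (displacement) convexity of the $L^{q'}$ norm, exactly as in Lemma 3.1 and Proposition \ref{convexity}; your bookkeeping of the constant $2^{p-1}$ and the exhaustion argument for unbounded $\Omega$ also match the paper. But that is a different statement from the one you were asked to prove, and for Proposition \ref{thm:geodesics} itself your text supplies no proof at all.
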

The other expedient result from Optimal Transport we need is the following convexity property of $L^q$ norms. For $m=2$, the following one is a particular case of a result by McCann, see \cite{McC}. The proof can be found, for instance, in \cite[Theorem 7.28]{OTAM}.
\begin{proposition}[Geodesic convexity of $L^p$ norms]
\label{convexity}
Let $1<m<\infty$ and let $\Omega\subset\mathbb{R}^N$ be an open bounded convex set. Let $\rho_0=f_0\cdot\mathcal{L}^N$ and $\rho_1=f_1\cdot\mathcal{L}^N$ be two probability measures on $\Omega$, such that $f_0,f_1\in L^q(\Omega)$ for some $1\le q\le \infty$. If $(\mu_t)_{t\in[0,1]}\subset\mathbb{W}_m(\Omega)$ is the curve of Theorem \ref{thm:geodesics}, then we have
\[
\mu_t=f_t\cdot\mathcal{L}^N\qquad \mbox{ and }\qquad \|f_t\|_{L^q(\Omega)}\le \left((1-t)\,\|f_0\|^q_{L^q(\Omega)}+t\,\|f_1\|^q_{L^q(\Omega)}\right)^\frac{1}{q},\quad t\in[0,1].
\]
\end{proposition}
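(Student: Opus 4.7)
The plan is to work on bounded convex $\Omega$ first (extending to unbounded $\Omega$ by exhaustion $\Omega\cap B_R$, with the tails controlled by the moment hypothesis), and to test the continuity equation of the Benamou--Brenier geodesic between $\rho_0$ and $\rho_1$ against $\phi$ itself; the resulting space-time integral is then estimated by H\"older in $x$ combined with the geodesic convexity of an $L^{q/(q-1)}$-norm in $t$. As a preliminary, note that the zero-mean condition $\phi\in\ddot{\mathrm{W}}^{1,p}_{q-1}$ forces the two normalizers defining $\rho_0,\rho_1$ to coincide; denote their common value by $M$, so $2M = \int_\Omega|\phi|^{q-1}dx$.

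Let $(\mu_t)_{t\in[0,1]}$, $\mu_t = f_t\cdot\mathcal{L}^N$, with velocity field $\mathbf{v}_t$, be the constant-speed Wasserstein geodesic at exponent $m := p/(p-q)$ from $\rho_0$ to $\rho_1$ provided by Proposition \ref{thm:geodesics}, so that $\|\mathbf{v}_t\|_{L^m(\Omega;\mu_t)} \equiv W_m(\rho_0,\rho_1)$. Testing the continuity equation with a smooth approximation of the time-independent function $\phi$, and using the pointwise identities $\phi\cdot|\phi|^{q-2}\phi_\pm = \pm\phi_\pm^q$, yields
\[
\int_\Omega|\phi|^q\,dx \;=\; -M\int_0^1\!\!\int_\Omega\langle\nabla\phi,\mathbf{v}_t\rangle\,d\mu_t\,dt \;\le\; M\int_0^1\!\!\int_\Omega|\nabla\phi|\,|\mathbf{v}_t|\,f_t\,dx\,dt.
\]
Writing $f_t = f_t^{1/m}\cdot f_t^{(m-1)/m}$ and applying H\"older in $x$ with the three exponents $(p, m, p/(q-1))$ (whose reciprocals sum to $1$) bounds the inner integral by
\[
\|\nabla\phi\|_{L^p(\Omega)}\,\|\mathbf{v}_t\|_{L^m(\Omega;\mu_t)}\,\Big(\int_\Omega f_t^{q/(q-1)}\,dx\Big)^{(q-1)/p},
\]
since $(1-1/m)\cdot p/(q-1) = q/(q-1)$.

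By Proposition \ref{convexity} at exponent $q/(q-1)$, the map $t\mapsto\int_\Omega f_t^{q/(q-1)}dx$ is convex on $[0,1]$; moreover the endpoint values satisfy $\int_\Omega f_0^{q/(q-1)}dx = M^{-q/(q-1)}\int_\Omega\phi_+^q\,dx$ and symmetrically for $f_1$, so their sum equals $M^{-q/(q-1)}\int_\Omega|\phi|^q\,dx$. Integrating the previous estimate in $t$, using the constant speed to extract the factor $W_m(\rho_0,\rho_1)$, and applying Jensen's inequality to the concave map $s\mapsto s^{(q-1)/p}$ gives
\[
\int_0^1\!\!\Big(\int_\Omega f_t^{q/(q-1)}dx\Big)^{(q-1)/p}\!dt \;\le\; \Big(\frac{1}{2M^{q/(q-1)}}\int_\Omega|\phi|^q\,dx\Big)^{(q-1)/p}.
\]
Combining everything, substituting $M = (1/2)\int_\Omega|\phi|^{q-1}dx$, and collecting the factors of $2$ (which combine to $2^{(p-1)/p}$ via $(q-1)/p + (p-q)/p = (p-1)/p$) produces an inequality of the form $(\int|\phi|^q)^{(p-q+1)/p} \le (\text{RHS})$; raising to the power $p$ delivers exactly \eqref{filippica}.

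The main subtlety is choosing the H\"older triple so that the third factor involves $L^{q/(q-1)}$: this is precisely the exponent whose endpoint computation, combined with $f_0\cdot f_1 = 0$ and the identity $|\phi|^{q-2}\phi_+ + |\phi|^{q-2}\phi_- = |\phi|^{q-1}$, reconstructs $\int|\phi|^q$ from $\int f_0^{q/(q-1)} + \int f_1^{q/(q-1)}$. Secondary issues are the smooth approximation of $\phi$ to justify its use in the continuity equation (exploiting that $\phi$ can be truncated and mollified without destroying the zero-mean condition), and the approximation argument needed to reduce the unbounded case to the bounded convex case.
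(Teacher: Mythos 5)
Your proposal does not prove the statement in question. The statement is Proposition~\ref{convexity} itself: that along the Wasserstein geodesic $(\mu_t)$ of Proposition~\ref{thm:geodesics} the measures remain absolutely continuous, $\mu_t=f_t\cdot\mathcal{L}^N$, and $t\mapsto\|f_t\|_{L^q(\Omega)}^q$ is bounded by the linear interpolation of the endpoint values. What you have written is instead a (reasonable) sketch of the proof of Theorem~\ref{thm:mother} and of the expedient estimate \eqref{basic}: you test the continuity equation with $\phi$, apply the three-exponent H\"older inequality, and conclude \eqref{filippica}. In the middle of that argument you explicitly invoke ``Proposition~\ref{convexity} at exponent $q/(q-1)$'' to control $\int_\Omega f_t^{q/(q-1)}\,dx$ --- that is, you assume exactly the statement you were asked to prove, so as a proof of Proposition~\ref{convexity} the argument is circular, and nothing in your text addresses its actual content.

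A genuine proof of Proposition~\ref{convexity} is of a different nature: it is McCann's displacement convexity. In the paper it is not proved but quoted from \cite[Theorem 7.28]{OTAM} (and \cite{McC} for $m=2$). The standard argument represents the geodesic through the optimal transport map, $\mu_t=\big((1-t)\,\mathrm{id}+t\,T\big)_\#\rho_0$ (convexity of $\Omega$ guaranteeing that the interpolated points stay in $\Omega$), writes the interpolated density via the change-of-variables (Monge--Amp\`ere) formula
\[
f_t\big((1-t)x+t\,T(x)\big)=\frac{f_0(x)}{\det\!\big((1-t)\,I+t\,DT(x)\big)},
\]
and uses the concavity of $s\mapsto\det(s)^{1/N}$ on the relevant class of matrices together with the fact that $G(s)=s^{q}$ satisfies McCann's condition (i.e.\ $s\mapsto s^N G(s^{-N})$ is convex nonincreasing) to deduce that $t\mapsto\int_\Omega f_t^q\,dx$ is convex, hence bounded by $(1-t)\|f_0\|_{L^q}^q+t\|f_1\|_{L^q}^q$; the case $q=\infty$ follows by a limiting or direct maximum-of-density argument. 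None of these ingredients appear in your proposal, so there is a genuine gap: the key idea needed for this statement (the behaviour of the Jacobian determinant along displacement interpolation) is missing, and the argument you give cannot be repaired into a proof of it without importing that machinery.
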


\section{Proof of the main result}
\label{sec:3}

\subsection{An expedient estimate}
We first need the following preliminary result. The idea of the proof is similar to that of \cite[Proposition 2.6]{Lo} and \cite[Lemma 3.5]{MRCS}, though the final outcome is different. We also cite the short unpublished note \cite{Pey} containing interesting uniform estimates on these topics.
\begin{lemma}
Let $1<q<p<\infty$ and let $\Omega\subset\mathbb{R}^N$ be an open and bounded convex set. For every $\phi\in \mathrm{W}^{1,p}(\Omega)$ and every $f_0,f_1\in L^{q'}(\Omega)$ such that 
\[
\int_\Omega f_0\,dx=\int_\Omega f_1\,dx=1,\qquad f_0,f_1\ge0,
\]
we have
\begin{equation}
\label{basic}
\begin{split}
\int_\Omega \phi\, (f_1-f_0)\, dx\le W_\frac{p}{p-q}(\rho_0,\rho_1)\,\|\nabla \phi\|_{L^{p}(\Omega)}\,\left(\frac{\|f_0\|^{q'}_{L^{q'}(\Omega)}+\|f_1\|^{q'}_{L^{q'}(\Omega)} }{2}\right)^\frac{q-1}{p},
\end{split}
\end{equation}
where
\[
\rho_i=f_i\cdot\mathcal{L}^N,\qquad i=0,1,
\]
\end{lemma}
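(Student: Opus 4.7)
The plan is to exploit the \emph{dynamical} formulation of optimal transport. Set $m = p/(p-q)$, so that $1/p + 1/m + (q-1)/p = 1$, and apply Proposition \ref{thm:geodesics} to connect $\rho_0$ and $\rho_1$ by a constant-speed Wasserstein geodesic $(\mu_t)_{t\in[0,1]} \subset \mathbb{W}_m(\Omega)$ with velocity field $\mathbf{v}_t$. Plugging the time-independent test function $\phi$ into the distributional form of the continuity equation (after a routine approximation of $\phi \in \mathrm{W}^{1,p}(\Omega)$ by smooth functions, recall $\Omega$ is bounded convex hence Lipschitz) yields the identity
\[
\int_\Omega \phi\,(f_1 - f_0)\,dx \;=\; \int_0^1 \int_\Omega \langle \nabla \phi, \mathbf{v}_t \rangle\, f_t\, dx\, dt,
\]
where $f_t$ is the density of $\mu_t$, which by Proposition \ref{convexity} lies in $L^{q'}(\Omega)$ with a quantitative bound.

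The second step is to apply H\"older's inequality jointly in $(t,x)$ with three exponents $p$, $m$, and $s := p/(q-1)$, after writing the integrand in the form
\[
|\nabla \phi| \cdot \bigl(|\mathbf{v}_t|\, f_t^{1/m}\bigr) \cdot f_t^{q/p}.
\]
The three resulting factors simplify as follows. Because $\phi$ does not depend on $t$, the first is $\|\nabla \phi\|_{L^p(\Omega)}$. The second is $\bigl(\int_0^1 \|\mathbf{v}_t\|_{L^m(\mu_t)}^m\, dt\bigr)^{1/m}$, which equals $W_m(\rho_0, \rho_1)$ since the optimal curve has constant metric speed equal to $W_m(\rho_0, \rho_1)$. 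The third is $\bigl(\int_0^1 \|f_t\|_{L^{q'}(\Omega)}^{q'}\, dt\bigr)^{(q-1)/p}$, using the identities $s \cdot (q/p) = q'$ and $1/s = (q-1)/p$.

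The final step is to control the $L^{q'}$-factor by invoking the displacement convexity given by Proposition \ref{convexity}:
\[
\|f_t\|_{L^{q'}(\Omega)}^{q'} \;\le\; (1-t)\,\|f_0\|_{L^{q'}(\Omega)}^{q'} + t\,\|f_1\|_{L^{q'}(\Omega)}^{q'},
\]
whose integration over $t \in [0,1]$ produces the arithmetic mean $\tfrac{1}{2}\bigl(\|f_0\|^{q'}_{L^{q'}} + \|f_1\|^{q'}_{L^{q'}}\bigr)$. Combining the three factors yields exactly \eqref{basic}. The only delicate point is the bookkeeping of the H\"older exponents: one must decompose $f_t = f_t^{1/m} \cdot f_t^{q/p}$ so that $|\mathbf{v}_t|\, f_t^{1/m}$ falls naturally into $L^m$ against Lebesgue measure (producing the Wasserstein action), while the residual power $f_t^{q/p}$ generates an $L^{q'}$-term raised to precisely the exponent $(q-1)/p$ that the inequality requires.
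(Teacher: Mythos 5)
Your proposal is correct and follows essentially the same route as the paper: the continuity equation along the constant-speed $W_{p/(p-q)}$-geodesic, a H\"older estimate splitting the integrand into the gradient term, the Wasserstein action, and an $L^{q'}$ factor, and finally the displacement convexity of $t\mapsto\|f_t\|_{L^{q'}(\Omega)}^{q'}$ followed by density of $C^1(\overline\Omega)$ in $\mathrm{W}^{1,p}(\Omega)$. The only cosmetic difference is that you apply a single three-exponent H\"older inequality where the paper applies two successive two-exponent ones; the resulting factors are identical.
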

\begin{proof}
Let us first suppose that $\phi\in C^1(\overline\Omega)$. In this case we clearly have $C^1(\overline\Omega)\subset \mathrm{W}^{1,p}(\Omega)$.
\par
Then, by using Propositions \ref{thm:geodesics} and \ref{convexity} with $\rho_0=f_0\cdot\mathcal{L}^N$ and $\rho_1=f_1\cdot\mathcal{L}^N$ and observing that $\phi$ does not depend on $t$, with the previous notation we can infer
\[
\begin{split}
\int_\Omega \phi\, (f_1-f_0)\, dx&=\int_0^1 \int_\Omega \langle \nabla\phi,\mathbf{v}_t\rangle\, f_t\,dx\, dt\\
&\le \left(\int_0^1\int_\Omega |\nabla \phi|^\frac{p}{q}\,f_t\,dx\, dt\right)^\frac{q}{p}\,\left(\int_0^1\int_\Omega |\mathbf{v}_t|^r\, f_t\,dx\,dt\right)^\frac{1}{r}\\
&\le\left(\int_0^1\int_\Omega |\nabla \phi|^{p}\,dx\, dt\right)^\frac{1}{p}\,\left(\int_0^1 \|f_t\|^{q'}_{L^{q'}(\Omega)}\,dt\right)^\frac{q-1}{p}\,W_r(\rho_0,\rho_1),
\end{split}
\]
where for notational simplicity we set $r:=p/(p-q)$.
Observe that the last term is finite, since $f_t\in L^{q'}(\Omega)$ and its $L^{q'}$ norm is integrable in time, thanks to Proposition \ref{convexity}.
\par
Since $\phi$ does not depend on $t$, 
from the previous estimate we get in particular
\[
\int_\Omega \phi\, (f_1-f_0)\, dx\le W_r(\rho_0,\rho_1)\,\|\nabla \phi\|_{L^{p}(\Omega)}\,\left(\int_0^1 \|f_t\|^{q'}_{L^{q'}(\Omega)}\,dt\right)^\frac{q-1}{p}.
\]
We now observe that by Proposition \ref{convexity}
\[
\begin{split}
\int_0^1 \|f_t\|^{q'}_{L^{q'}(\Omega)}\,dt&\le \int_0^1 \left[\|f_0\|^{q'}_{L^{q'}(\Omega)} +t\,\left(\|f_1\|^{q'}_{L^{q'}(\Omega)}-\|f_0\|^{q'}_{L^{q'}(\Omega)} \right)\right]\,dt\\
&=\frac{\|f_0\|^{q'}_{L^{q'}(\Omega)}+\|f_1\|^{q'}_{L^{q'}(\Omega)} }{2}.
\end{split}
\]
thus we obtain the desired estimate \eqref{basic}, for $\Omega$ bounded and $\phi\in C^1(\overline\Omega)$.
\par
Finally, we get the general case by using the density of $C^1(\overline\Omega)$ in $\mathrm{W}^{1,p}(\Omega)$, see \cite[Theorem 1, Section 1.1.6]{Ma}.
\end{proof}

\subsection{Proof of Theorem \ref{thm:mother}}
We divide the proof in two steps: we first prove the inequality for bounded convex sets and then consider the general case. For the sake of simplicity, we set again $r:=p/(p-q)$.
\vskip.2cm\noindent
\underline{\it Bounded convex sets.} Let $\phi\in \ddot{\mathrm{W}}^{1,p}_{q-1}(\Omega)\setminus\{0\}$, the hypothesis $\int_\Omega |\phi|^{p-2}\,\phi=0$ implies
\begin{equation}
\label{smezzo}
\int_\Omega |\phi|^{q-1}\,dx=2\int_\Omega |\phi|^{q-2}\,\phi_+\,dx=2\,\int_\Omega |\phi|^{q-2}\,\phi_-\,dx.
\end{equation}
By Lemma \ref{lm:dentro}, we have $\phi\in \mathrm{W}^{1,p}(\Omega)$ as well, thus we can now apply \eqref{basic} with the choices
\[
\rho_1=f_1\cdot\mathcal{L}^N:=\frac{|\phi|^{q-2}\,\phi_+}{\displaystyle\int_\Omega |\phi|^{q-2}\,\phi_+\,dx}\cdot\mathcal{L}^N \quad \mbox{ and }\quad \rho_0=f_0\cdot\mathcal{L}^N=\frac{|\phi|^{q-2}\,\phi_-}{\displaystyle\int_\Omega|\phi|^{q-2}\,\phi_-\,dx}\cdot\mathcal{L}^N.
\]
For the left-hand side of \eqref{basic}, by using \eqref{smezzo} we get
\[
\int_\Omega \phi\, (f_1-f_0)\, dx=2\,\frac{\displaystyle\int_\Omega |\phi|^q\,dx}{\displaystyle\int_\Omega |\phi|^{q-1}\,dx}.
\]
For the right-hand side of \eqref{basic}, we observe that again by \eqref{smezzo} and using that 
\[
|\phi|^{q-2}\,\phi_+=\phi_+^{q-1},\qquad |\phi|^{q-2}\,\phi_-=\phi_-^{q-1},
\]
we get
\[
\begin{split}
\|f_0\|^{q'}_{L^{q'}(\Omega)}+\|f_1\|^{q'}_{L^{q'}(\Omega)}&=\frac{\displaystyle\int_\Omega \left(|\phi|^{q-2}\,\phi_-\right)^\frac{q}{q-1}\,dx}{\displaystyle\left(\int_\Omega |\phi|^{q-2}\, \phi_-\,dx\right)^\frac{q}{q-1}}+\frac{\displaystyle\int_\Omega \left(|\phi|^{q-2}\,\phi_+\right)^\frac{q}{q-1}\,dx}{\displaystyle\left(\int_\Omega |\phi|^{q-2}\, \phi_+\,dx\right)^\frac{q}{q-1}}\\
&=2^\frac{q}{q-1}\,\frac{\displaystyle\displaystyle\int_\Omega |\phi|^q\,dx}{\left(\displaystyle\int_\Omega |\phi|^{q-1}\,dx\right)^\frac{q}{q-1}}.
\end{split}
\]
Then from \eqref{basic} we finally obtain
\[
\frac{\displaystyle\int_\Omega |\phi|^q\,dx}{\displaystyle\int_\Omega |\phi|^{q-1}\,dx}\le \frac{W_r(\rho_0,\rho_1)}{2^\frac{p-1}{p}}\,\left(\int_\Omega |\nabla \phi|^p\,dx\right)^\frac{1}{p}\,\frac{\displaystyle\left(\int_{\Omega} |\phi|^q\,dx\right)^\frac{q-1}{p}}{\left(\displaystyle\int_\Omega |\phi|^{q-1}\,dx\right)^\frac{q}{p}}.
\]
After a simplification, this proves the desired inequality \eqref{filippica} when $\Omega$ is a bounded set.
\vskip.2cm\noindent
\underline{\it General convex sets}.
Let us now assume that $\Omega$ is a generic open convex set and $\phi\in \ddot{\mathrm{W}}^{1,p}_{q-1}(\Omega)$. We can suppose that the origin belongs to $\Omega$, then for $k\in\mathbb{N}\setminus\{0\}$ we define
\[
\Omega_k=\{x\in\Omega\, :\, |x|<k\}\qquad \mbox{ and }\qquad \delta_k=\left(\frac{\displaystyle\int_{\Omega_k} |\phi_+|^{q-1}\,dx}{\displaystyle\int_{\Omega_k} |\phi_-|^{q-1}\,dx}\right)^{1/(q-1)}.
\]
Note that, at least for $k$ large, $\delta_k$ is well-defined, since 
\[
\lim_{k\to\infty} \int_{\Omega_k} |\phi_-|^{q-1}\,dx=\int_{\Omega} |\phi_-|^{q-1}\,dx, 
\]
and the last quantity is strictly positive, unless $\phi=0$ (in this case there would be nothing to prove).
\par
The function $\phi_k=\phi_+-\delta_k\,\phi_-$ belongs to $\ddot{\mathrm{W}}^{1,p}_{q-1}(\Omega_k)$, by construction. Moreover, since $\phi\in \ddot{\mathrm{W}}^{1,p}_{q-1}(\Omega)$, we have
\begin{equation}
\label{deltak}
\lim_{k\to\infty} \delta_k=1.
\end{equation}
We also set
\[
\rho_{1,k}:=\frac{|\phi_k|^{q-2}\,(\phi_k)_+}{\displaystyle\int_{\Omega_k} |\phi_k|^{q-2}\,(\phi_k)_+\,dx}\cdot\mathcal{L}^N=\frac{|\phi|^{q-2}\,\phi_+}{\displaystyle\int_{\Omega_k} |\phi|^{q-2}\,\phi_+\,dx}\cdot\mathcal{L}^N
\]
and
\[
\rho_{0,k}:=\frac{|\phi_k|^{q-2}\,(\phi_k)_-}{\displaystyle\int_{\Omega_k}|\phi_k|^{q-2}\,(\phi_k)_-\,dx}\cdot\mathcal{L}^N=\frac{|\phi|^{q-2}\,\phi_-}{\displaystyle\int_{\Omega_k}|\phi|^{q-2}\,\phi_-\,dx}\cdot\mathcal{L}^N.
\]
Since $\Omega_k$ is bounded, from the previous step we obtain
\begin{equation}
\label{alfinito}
\left(\int_{\Omega_k} |\phi_k|^q\,dx\right)^{p-q+1}\le \frac{\left(W_r(\rho_{0,k},\rho_{1,k})\right)^p}{2^{p-1}}\,\int_{\Omega_k} |\nabla \phi_k|^p\,dx\,\left(\displaystyle\int_{\Omega_k} |\phi_k|^{q-1}\,dx\right)^{p-q}.
\end{equation}
We now observe that
\[
\lim_{k\to\infty} W_r(\rho_{0,k},\rho_{1,k})=W_r(\rho_0,\rho_1).
\]
Indeed, it is enough to remark that we have $\rho_{i,k}\to\rho_i$ in $\mathbb{W}_r(\Omega)$ for $i=0,1$. This follows from the fact that the convergence in $\mathbb W_r$ is equivalent to the weak convergence plus the convergence of the moments of order $r$ (see for instance \cite[Theorem 5.11]{OTAM}). Both conditions are easily seen to hold true here.
\par
Moreover, by construction we have
\[
|\phi_k|^{q-1}\cdot 1_{\Omega_k}\le \left(\max\{1,\delta_k\}\right)^{q-1}\,|\phi|^{q-1}\,\cdot 1_{\Omega},
\]
and
\[
|\nabla \phi_k|^p\cdot 1_{\Omega_k}\le  \left(\max\{1,\delta_k\}\right)^p\,|\nabla \phi|^p\,\cdot 1_\Omega.
\]
If we use \eqref{deltak}, we can pass to the limit as $k$ goes to $\infty$ in \eqref{alfinito}, by using the Dominated Convergence Theorem on the right-hand side and Fatou's Lemma on the left-hand side. This finally gives \eqref{filippica} for a generic function $\phi\in\ddot{\mathrm{W}}^{1,p}_{q-1}(\Omega)$.

\section{Some consequences}
\label{sec:4}
In this section, we discuss some functional inequalities which are contained in nuce in Theorem \ref{thm:mother}. 
\subsection{General convex sets} We start with the following inequality, valid for general convex sets. We observe again that it is not necessary to assume $\phi\in L^q(\Omega)$.
\begin{corollary}
Let $1<p<\infty$ and $1<q< p$. Let $\Omega\subset\mathbb{R}^N$ be an open convex set. For every $\phi\in \ddot{\mathrm{W}}^{1,p}_{q-1}(\Omega)$ such that
\[
\int_\Omega |x|^\frac{p}{p-q}\,|\phi|^{q-1}\,dx<\infty, 
\] 
we have
\begin{equation}
\label{filippicabis}
\left(\int_\Omega |\phi|^q\,dx\right)^{p-q+1}\le 2\,\left(\inf_{x_0\in\Omega}\int_\Omega |x-x_0|^\frac{p}{p-q}\,|\phi|^{q-1}\,dx\right)^{p-q}\,\int_\Omega |\nabla \phi|^p\,dx.
\end{equation}
\end{corollary}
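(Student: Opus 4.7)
The plan is to deduce \eqref{filippicabis} directly from Theorem \ref{thm:mother} by producing an explicit upper bound for the Wasserstein distance $W_r(\rho_0,\rho_1)$, where $r=p/(p-q)$, in terms of a moment of $|\phi|^{q-1}$ around an arbitrary point $x_0\in\Omega$. The key observation is the triangle inequality in $\mathbb{W}_r(\Omega)$:
\[
W_r(\rho_0,\rho_1)\le W_r(\rho_0,\delta_{x_0})+W_r(\delta_{x_0},\rho_1),
\]
together with the explicit formula $W_r(\rho,\delta_{x_0})^r=\int_\Omega |x-x_0|^r\,d\rho$.

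First I would invoke the zero-mean condition in $\ddot{\mathrm{W}}^{1,p}_{q-1}(\Omega)$, which, exactly as in \eqref{smezzo}, gives
\[
\int_\Omega |\phi|^{q-2}\phi_+\,dx=\int_\Omega |\phi|^{q-2}\phi_-\,dx=\frac{1}{2}\int_\Omega |\phi|^{q-1}\,dx.
\]
Plugging in the definitions of $\rho_0,\rho_1$ and setting
\[
A:=\int_\Omega |x-x_0|^r\,|\phi|^{q-2}\phi_-\,dx,\qquad B:=\int_\Omega |x-x_0|^r\,|\phi|^{q-2}\phi_+\,dx,
\]
the triangle inequality yields
\[
W_r(\rho_0,\rho_1)\le \left(\frac{2\,A}{\int_\Omega |\phi|^{q-1}\,dx}\right)^{1/r}+\left(\frac{2\,B}{\int_\Omega |\phi|^{q-1}\,dx}\right)^{1/r}.
\]
Since $\phi_++\phi_-=|\phi|$, we have $A+B=\int_\Omega |x-x_0|^r\,|\phi|^{q-1}\,dx$, and by concavity of $t\mapsto t^{1/r}$ (recall $r>1$), $A^{1/r}+B^{1/r}\le 2^{1-1/r}(A+B)^{1/r}$. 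Combining these two facts produces the clean bound
\[
W_r(\rho_0,\rho_1)\le 2\,\left(\frac{\int_\Omega |x-x_0|^r\,|\phi|^{q-1}\,dx}{\int_\Omega |\phi|^{q-1}\,dx}\right)^{1/r}.
\]

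Finally I would raise this to the $p$-th power, using the identity $p/r=p-q$, and insert it into inequality \eqref{filippica}. The factor $2^p/2^{p-1}=2$ appears in front, the term $\int_\Omega |\phi|^{q-1}\,dx$ cancels exactly against $(\int_\Omega |\phi|^{q-1}\,dx)^{p-q}$ appearing on the right of \eqref{filippica}, and I obtain the inequality with $x_0$ in place of the infimum. Taking the infimum over $x_0\in\Omega$ concludes the proof. The only potentially delicate point is checking that the constants line up correctly (the balance between the factor $2^{1-1/r}\cdot 2^{1/r}=2$ coming from the triangle/Jensen step and the $2^{p-1}$ denominator in \eqref{filippica}); no real obstacle arises beyond bookkeeping.
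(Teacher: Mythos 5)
Your argument is correct and is essentially the paper's own proof: the paper bounds $W_r(\rho_0,\rho_1)$ by $\bigl(\int|x-x_0|^r\,d\rho_0\bigr)^{1/r}+\bigl(\int|y-x_0|^r\,d\rho_1\bigr)^{1/r}$ via an optimal plan and Minkowski's inequality, which is exactly your triangle inequality through $\delta_{x_0}$, and then applies the same concavity step and the same cancellation of $\int_\Omega|\phi|^{q-1}\,dx$ in \eqref{filippica}. The constants line up as you describe, so nothing further is needed.
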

\begin{proof}
Let $\phi$ be a function as in the statement. We use the notations of Theorem \ref{thm:mother} and take $\gamma_{opt}\in\Pi(\rho_0,\rho_1)$ an optimal transport plan for $W_r(\rho_0,\rho_1)$ (where, as usual, $r=p/(p-q)$). By using the triangle inequality we get
\[
\begin{split}
W_{r}(\rho_0,\rho_1)&\le\left( \int_{\Omega\times\Omega} |x-x_0|^r\,d\gamma_{opt}\right)^{1/r}+\left( \int_{\Omega\times\Omega} |y-x_0|^r\,d\gamma_{opt}\right)^{1/r}\\
&=\left( \int |x-x_0|^r\,d\rho_0\right)^{1/r}+\left( \int |y-x_0|^r\,d\rho_1\right)^{1/r},
\end{split}
\]
for every $x_0\in\Omega$.
By using concavity of the map $\tau\mapsto\tau^{1/r}$, this in turn gives
\begin{equation}
\label{vaserstein}
\begin{split}
W_{r}(\rho_0,\rho_1)&\le 2^\frac{q}{p}\,\left(\int_\Omega |x-x_0|^r\,(d\rho_0+d\rho_1)\right)^{1/r}\\
&=2\,\left(\int_\Omega |x-x_0|^r\,|\phi|^{q-1}\,dx\right)^{1/r}\,\left(\int_\Omega |\phi|^{q-1}\,dx\right)^\frac{q-p}{p},
\end{split}
\end{equation}
where we used again \eqref{smezzo}, by assumption.
By using \eqref{vaserstein} in \eqref{filippica} and using the arbitrariness of $x_0\in\Omega$, we get the desired result.
\end{proof}
\subsection{Bounded convex sets}
In this case, Theorem \ref{thm:mother} implies some known inequalities, with explicit constants depending on simple geometric quantities and $p$ only.
\begin{corollary}[Nash-type inequality]
Let $1<p<\infty$ and $1<q< p$. Let $\Omega\subset\mathbb{R}^N$ be an open and bounded convex set. Then for every $\phi\in \ddot{\mathrm{W}}^{1,p}_{q-1}(\Omega)$ 
\begin{equation}
\label{filippica2}
\left(\displaystyle\int_\Omega |\phi|^q\,dx\right)^{p-q+1}\le \frac{\mathrm{diam}(\Omega)^p}{2^{p-1}}\,\int_\Omega |\nabla \phi|^p\,dx\,\left(\int_\Omega |\phi|^{q-1}\,dx\right)^{p-q}.
\end{equation}
\end{corollary}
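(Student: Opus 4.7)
The plan is to deduce the statement directly from Theorem \ref{thm:mother}, with essentially no work beyond a bound on the Wasserstein distance. First I would check that the hypotheses of Theorem \ref{thm:mother} are satisfied. Since $\Omega$ is bounded, there is some $R>0$ with $|x|\le R$ for every $x\in\Omega$, hence for any $\phi\in \ddot{\mathrm{W}}^{1,p}_{q-1}(\Omega)$ we have
\[
\int_\Omega |x|^{p/(p-q)}\,|\phi|^{q-1}\,dx\le R^{p/(p-q)}\,\int_\Omega |\phi|^{q-1}\,dx<\infty,
\]
so the moment condition is automatic. Thus \eqref{filippica} applies with the probability measures $\rho_0,\rho_1$ defined in the statement of Theorem \ref{thm:mother}.

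Next, I would bound $W_{p/(p-q)}(\rho_0,\rho_1)$ by $\mathrm{diam}(\Omega)$. Both $\rho_0$ and $\rho_1$ are supported in $\Omega$, so every transport plan $\gamma\in\Pi(\rho_0,\rho_1)$ is supported in $\Omega\times\Omega$, where $|x-y|\le\mathrm{diam}(\Omega)$. Setting $r=p/(p-q)$ for brevity, this gives
\[
W_r(\rho_0,\rho_1)^r=\min_{\gamma\in\Pi(\rho_0,\rho_1)}\int_{\Omega\times\Omega} |x-y|^r\,d\gamma\le \mathrm{diam}(\Omega)^r,
\]
since $\gamma$ is a probability measure. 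Substituting $W_r(\rho_0,\rho_1)\le \mathrm{diam}(\Omega)$ into \eqref{filippica} yields \eqref{filippica2} immediately, with the claimed constant $1/2^{p-1}$.

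There is no real obstacle here: the bounded case is essentially a one-line specialization of Theorem \ref{thm:mother}. I would just remark that the alternative route via \eqref{filippicabis} (which one might be tempted to take, bounding $|x-x_0|\le\mathrm{diam}(\Omega)$ for some $x_0\in\Omega$) would only yield a worse constant of $2$ instead of $1/2^{p-1}$, because \eqref{filippicabis} already lost a factor in passing through the triangle inequality. Hence the proof should go directly through \eqref{filippica} in order to preserve the sharper constant.
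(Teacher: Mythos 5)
Your proposal is correct and follows exactly the paper's route: bound $W_{p/(p-q)}(\rho_0,\rho_1)$ by $\mathrm{diam}(\Omega)$ and substitute into \eqref{filippica}. The extra details you supply (the automatic verification of the moment hypothesis on a bounded set, and the support argument for the transport plan) are accurate and harmless.
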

\begin{proof}
In order to prove \eqref{filippica2}, it is sufficient to observe that for a bounded set we have 
\[
W_r(\rho_0,\rho_1)\le \mathrm{diam}(\Omega).
\] 
If we spend this information in \eqref{filippica}, we can then conclude.
\end{proof}
\begin{corollary}[Poincar\'e-Wirtinger inequality]
Let $1<p<\infty$ and $1<q< p$. Let $\Omega\subset\mathbb{R}^N$ be an open and bounded convex set. Then for every $\phi\in \mathrm{W}^{1,p}_{q-1}(\Omega)$, 
there holds
\begin{equation}
\label{filippica3}
\min_{t\in\mathbb{R}}\left(\displaystyle\int_\Omega |\phi-t|^q\,dx\right)^\frac{p}{q}\le \frac{\mathrm{diam}(\Omega)^p}{2^{p-1}}\,|\Omega|^{\frac{p}{q}-1}\,\int_\Omega |\nabla \phi|^p\,dx\,.
\end{equation}
\end{corollary}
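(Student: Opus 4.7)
The strategy is to reduce \eqref{filippica3} to the Nash-type inequality \eqref{filippica2} by subtracting an appropriate constant and then absorbing the extra $L^{q-1}$ norm on the right-hand side via Hölder's inequality.

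First, given $\phi\in\mathrm{W}^{1,p}_{q-1}(\Omega)$, I would note that since $\Omega$ is bounded we in fact have $\phi\in L^p(\Omega)\subset L^q(\Omega)$ by Lemma \ref{lm:dentro}. Hence the function
\[
g(t):=\int_\Omega |\phi-t|^q\,dx
\]
is well-defined, strictly convex, and coercive on $\mathbb{R}$ (for $q>1$). Therefore it admits a unique minimizer $t_\star\in\mathbb{R}$, characterized by the Euler-Lagrange condition $g'(t_\star)=0$, which reads
\[
\int_\Omega |\phi-t_\star|^{q-2}\,(\phi-t_\star)\,dx=0.
\]
In particular, the translated function $\psi:=\phi-t_\star$ belongs to $\ddot{\mathrm{W}}^{1,p}_{q-1}(\Omega)$, with $\nabla\psi=\nabla\phi$.

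Applying the Nash-type inequality \eqref{filippica2} to $\psi$ gives
\[
\left(\int_\Omega |\psi|^q\,dx\right)^{p-q+1}\le \frac{\mathrm{diam}(\Omega)^p}{2^{p-1}}\,\int_\Omega |\nabla \phi|^p\,dx\,\left(\int_\Omega |\psi|^{q-1}\,dx\right)^{p-q}.
\]
Since $q-1<q$ and $\Omega$ is bounded, Hölder's inequality yields
\[
\int_\Omega |\psi|^{q-1}\,dx\le |\Omega|^{1/q}\,\left(\int_\Omega |\psi|^q\,dx\right)^{(q-1)/q},
\]
so raising to the power $p-q$ and plugging back in produces
\[
\left(\int_\Omega |\psi|^q\,dx\right)^{p-q+1}\le \frac{\mathrm{diam}(\Omega)^p}{2^{p-1}}\,|\Omega|^{(p-q)/q}\,\int_\Omega |\nabla\phi|^p\,dx\,\left(\int_\Omega |\psi|^q\,dx\right)^{(q-1)(p-q)/q}.
\]

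Finally, I would compute the exponent on the left after dividing:
\[
p-q+1-\frac{(q-1)(p-q)}{q}=\frac{q(p-q+1)-(q-1)(p-q)}{q}=\frac{p}{q},
\]
where the numerator simplifies since $q(p-q+1)-(q-1)(p-q)=qp-q^2+q-(qp-q^2-p+q)=p$. Therefore
\[
\left(\int_\Omega |\phi-t_\star|^q\,dx\right)^{p/q}\le \frac{\mathrm{diam}(\Omega)^p}{2^{p-1}}\,|\Omega|^{p/q-1}\,\int_\Omega |\nabla\phi|^p\,dx,
\]
and since the left-hand side is an upper bound for $\min_{t\in\mathbb{R}}(\int_\Omega|\phi-t|^q\,dx)^{p/q}$, \eqref{filippica3} follows.

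There is no real obstacle: the only mildly delicate point is the existence of the optimal constant $t_\star$ (handled by strict convexity plus coercivity of $g$, which needs $\phi\in L^q(\Omega)$ and therefore requires invoking Lemma \ref{lm:dentro} to know a priori that $\phi\in L^p(\Omega)$). The rest is bookkeeping with Hölder and a small exponent computation.
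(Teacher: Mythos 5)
Your proposal is correct and follows essentially the same route as the paper: translate by the unique minimizer $t_\star$ so that $\phi-t_\star\in\ddot{\mathrm{W}}^{1,p}_{q-1}(\Omega)$, apply the Nash-type inequality \eqref{filippica2}, and absorb the $L^{q-1}$ norm via H\"older's inequality with the same exponent bookkeeping. The only cosmetic difference is that the paper does not bother dividing through (the division is harmless since the case $\int_\Omega|\phi-t_\star|^q\,dx=0$ is trivial).
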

\begin{proof}
Let $\phi\in \mathrm{W}^{1,p}_{q-1}(\Omega)$, by Lemma \ref{lm:dentro} we know in particular that $\phi\in L^q(\Omega)$. Then we can define $t_q$ the unique minimizer of 
\[
t\mapsto\left(\displaystyle\int_\Omega |\phi-t|^q\,dx\right)^\frac{p}{q}.
\]
By minimality, we have
\[
\int_\Omega |\phi-t_q|^{q-2}\,(\phi-t_q)\,dx=0.
\]
Thus the function $\phi-t_q$ belongs to $\ddot{\mathrm{W}}^{1,p}_{q-1}(\Omega)$. We just need to observe that since $\phi-t_q\in L^q(\Omega)$, then
\[
\left(\int_\Omega |\phi-t_q|^{q-1}\,dx\right)^{p-q}\le |\Omega|^\frac{p-q}{q}\,\left(\int_\Omega |\phi-t_q|^q\,dx\right)^{\frac{p-q}{q}\,(q-1)}.
\]
By using this in \eqref{filippica2} for the function $\phi-t_q$, we get the conclusion.
\end{proof}
\begin{remark}
Observe that the constant in \eqref{filippica3} degenerates to $0$ as the measure $|\Omega|$ gets smaller and smaller. This behaviour is optimal, as one may easily verify. Indeed, by taking $n\in\mathbb{N}\setminus\{0\}$ and
\begin{equation}
\label{parallelepipedi}
\Omega_n=[0,1]\times\left[0,\frac{1}{n}\right]\times\dots\times\left[0,\frac{1}{n}\right]\qquad \mbox{ and }\qquad \phi(x)=x_1,
\end{equation}
we have
\[
\frac{\displaystyle\left(\int_{\Omega_n} |\phi|^q\,dx\right)^\frac{p}{q}}{\displaystyle\int_{\Omega_n} |\nabla \phi|^p\,dx}\simeq \left(\frac{1}{n}\right)^{(N-1)\,\frac{p-q}{q}}=|\Omega_n|^\frac{p-q}{q}.
\]
\end{remark}
We conclude this list with an application to spectral problems. 
Let $1<p<\infty$, for every $\Omega\subset\mathbb{R}^N$ open and bounded set we introduce its {\it first non-trivial Neumann eigenvalue of the $p-$Laplacian}, i.e.
\[
\mu(\Omega;p):=\inf_{\phi\in \mathrm{W}^{1,p}(\Omega)\setminus\{0\}}\left\{\frac{\displaystyle\int_\Omega |\nabla \phi|^p\,dx}{\displaystyle\int_\Omega |\phi|^p\,dx}\,:\,\int_\Omega |\phi|^{p-2}\,\phi\,dx=0\right\}.
\]
The terminology is justified by the fact that for a connected set with Lipschitz boundary, the costant $\mu(\Omega;p)$ is the smallest number different from $0$ such that the Neumann boundary value problem
\[
\left\{\begin{array}{rclc}
-\mathrm{div}(|\nabla u|^{p-2}\,\nabla u)&=&\mu\,|u|^{p-2}\,u,& \mbox{ in }\Omega,\\
\displaystyle\frac{\partial u}{\partial \nu_\Omega}&=&0, & \mbox{ on }\partial\Omega
\end{array}
\right.
\]
admits non-trivial weak solutions. We then have the following result, which corresponds to the limit case $q=p$ of Theorem \ref{thm:mother}.
\begin{corollary}[Payne-Weinberger type estimate]
\label{coro:PW}
Let $1<p<\infty$ and let $\Omega\subset\mathbb{R}^N$ be an open and bounded convex set. We have the lower bound
\begin{equation}
\label{PW}
\left(\frac{2^\frac{p-1}{p}}{\mathrm{diam}(\Omega)}\right)^p\le \mu(\Omega;p).
\end{equation}
\end{corollary}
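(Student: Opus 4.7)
The plan is to derive Corollary \ref{coro:PW} from the Poincaré-Wirtinger inequality \eqref{filippica3} by a limiting argument $q \to p^-$. Fix a non-zero $\phi \in \mathrm{W}^{1,p}(\Omega)$ satisfying the Neumann constraint $\int_\Omega |\phi|^{p-2}\phi \, dx = 0$. Since $\Omega$ is bounded, $\phi \in \mathrm{W}^{1,p}_{q-1}(\Omega)$ for every $1 < q < p$, so \eqref{filippica3} applies and gives
\[
\Bigl(\int_\Omega |\phi - t_q|^q \, dx\Bigr)^{p/q} \le \frac{\mathrm{diam}(\Omega)^p}{2^{p-1}} \, |\Omega|^{p/q - 1} \int_\Omega |\nabla \phi|^p \, dx,
\]
where $t_q \in \mathbb{R}$ is the unique minimizer of the strictly convex coercive functional $t \mapsto \int_\Omega |\phi - t|^q \, dx$ (the minimizer is unique since $\phi$ is not constant, as forced by the Neumann constraint with $\phi \not\equiv 0$), characterized by $\int_\Omega |\phi - t_q|^{q-2}(\phi - t_q)\,dx = 0$.

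The heart of the argument is to show $t_q \to 0$ as $q \to p^-$. First, minimality yields $\int |\phi - t_q|^q \, dx \le \int |\phi|^q \, dx$, so by Minkowski's inequality $|t_q| \le 2 \|\phi\|_{L^q(\Omega)} |\Omega|^{-1/q}$, which stays bounded uniformly for $q$ near $p$; call this bound $M$. Along any subsequence $q_k \to p$ with $t_{q_k} \to t_*$, the Euler-Lagrange condition $\int_\Omega |\phi - t_{q_k}|^{q_k - 2}(\phi - t_{q_k}) \, dx = 0$ passes to the limit by dominated convergence, since the integrands are pointwise bounded by $1 + (|\phi| + M)^{p-1}$, which is integrable on the bounded set $\Omega$ because $\phi \in L^p(\Omega)$. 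We thus obtain $\int_\Omega |\phi - t_*|^{p-2}(\phi - t_*) \, dx = 0$; strict convexity of $t \mapsto \int_\Omega |\phi - t|^p\,dx$ combined with the hypothesis $\int_\Omega |\phi|^{p-2}\phi \, dx = 0$ forces $t_* = 0$, so the full family converges.

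The same domination allows us to pass to the limit on the left side: $\int_\Omega |\phi - t_q|^q \, dx \to \int_\Omega |\phi|^p \, dx$ as $q \to p^-$, while $|\Omega|^{p/q - 1} \to 1$. The displayed inequality therefore becomes
\[
\int_\Omega |\phi|^p \, dx \le \frac{\mathrm{diam}(\Omega)^p}{2^{p-1}} \int_\Omega |\nabla \phi|^p \, dx,
\]
and taking the infimum over admissible $\phi$ yields $\mu(\Omega;p) \ge 2^{p-1} / \mathrm{diam}(\Omega)^p$, which is exactly \eqref{PW}. The main obstacle is to establish the convergence $t_q \to 0$; once that is in hand, the rest of the passage to the limit is routine dominated convergence.
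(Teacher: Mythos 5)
Your proof is correct and follows the same route as the paper: apply \eqref{filippica3} to $\phi$ and let $q\to p^-$, after showing that the quantities $t_q$ stay bounded via minimality and Minkowski's inequality. The only difference is that the paper never identifies the limit of $t_q$ --- it simply observes that any limit point $\overline t$ satisfies $\int_\Omega|\phi-\overline t|^p\,dx\ \ge\ \min_{t\in\mathbb{R}}\int_\Omega|\phi-t|^p\,dx=\int_\Omega|\phi|^p\,dx$, the last equality coming from the constraint $\int_\Omega|\phi|^{p-2}\phi\,dx=0$ --- whereas your identification of $t_*=0$ through the Euler--Lagrange equation and dominated convergence is a bit more work but equally valid.
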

\begin{proof}
We take $\phi\in \mathrm{W}^{1,p}_{p}(\Omega)\setminus\{0\}$ such that $\int_\Omega |\phi|^{p-2}\,\phi\,dx=0$. Then we have
\begin{equation}
\label{00}
\min_{t\in\mathbb{R}} \displaystyle\int_\Omega |\phi-t|^p\,dx=\int_\Omega |\phi|^p\,dx.
\end{equation}
For $1<q<p$, we take $t_q\in\mathbb{R}$ to be the unique minimizer of 
\[
t\mapsto\left(\displaystyle\int_\Omega |\phi-t|^q\,dx\right)^\frac{p}{q}.
\]
By minimality of $t_q$ and Minkowski inequality, we have
\[
t_q\,|\Omega|^\frac{1}{q}-\left(\displaystyle\int_\Omega |\phi|^q\,dx\right)^\frac{1}{q}\le \left(\displaystyle\int_\Omega |\phi-t_q|^q\,dx\right)^\frac{1}{q}\le \left(\displaystyle\int_\Omega |\phi|^q\,dx\right)^\frac{1}{q}.
\]
This shows that $\{t_q\}_{q<p}$ is bounded, thus if we take the limit as $q$ goes to $p$, then $t_q$ converges (up to a subsequence) to some $\overline t$. By passing to the limit in \eqref{filippica3} we get
\[
\int_\Omega |\phi-\overline t|^p\,dx\le \frac{\mathrm{diam}(\Omega)^p}{2^{p-1}}\,\int_\Omega |\nabla \phi|^p\,dx.
\]
By keeping into account \eqref{00}, we get the desired conclusion.
\end{proof}
\begin{remark}
As mentioned in the Introduction, the constant appearing in the left-hand side of \eqref{PW} is not sharp. Indeed, the sharp lower bound is known to be
\begin{equation}
\label{PWs}
\left(\frac{\pi_p}{\mathrm{diam}(\Omega)}\right)^p< \mu(\Omega;p),\qquad \mbox{ where }\ \pi_p=2\,\pi\,\frac{(p-1)^\frac{1}{p}}{p\,\sin\left(\frac{\pi}{p}\right)},
\end{equation}
as proved by Payne and Weinberger in \cite{PW} for $p=2$ (see also \cite{Be}). The general case $p\not =2$ has been proved in \cite{ENT,FNT}. We recall that \eqref{PWs} is sharp in the following sense: for every convex set $\Omega$ the inequality in \eqref{PWs} is strict and it becomes asymptotically an equality along the sequence \eqref{parallelepipedi}.
\par
In the limit case $p=1$, a related result can be found in \cite{AD}.
\end{remark}

\medskip

\end{document}